\newtheorem{theorem}{Theorem}[section]
\theoremstyle{definition}
\newtheorem{definition}[theorem]{Definition}
\theoremstyle{remark}
\newcommand{\V}{\boldsymbol{V}}
\newcommand{\s}{\mathcal{S}}
\newcommand{\e}{\mathrm{e}^}
\newcommand{\BL}{\mathcal{L}}
\newcommand{\lx}{\mathrm{ch}(G)}
\begin{document}

\title[List coloring and the Potts model]{A note on recognizing an old friend in a new place: list coloring and the zero-temperature Potts model}

\author[J.~Ellis-Monaghan]{Joanna A. Ellis-Monaghan}
\address{Department of Mathematics, Saint Michael's College, 1 Winooski Park, Colchester, VT 05439, USA.  }
\email{jellis-monaghan@smcvt.edu}
\thanks{The work of the first author was supported by the National Science Foundation (NSF) under grant number DMS-1001408.  This paper's contents are solely the responsibility of the authors and do not necessarily represent the official views of the NSF}

\author[I.~Moffatt]{Iain Moffatt}
\address{Department of Mathematics, Royal Holloway, University of London, Egham, Surrey, TW20 0EX, United Kingdom}
\email{iain.moffatt@rhul.ac.uk}
\thanks{Some of this work was undertaken while the authors were visiting the Erwin Schr\"odinger International Institute for Mathematical Physics (ESI) in Vienna. We would like to thank the ESI for their support and for providing a productive working environment}

\subjclass[2010]{Primary 82B20, Secondary 05C15, 05C31}

\keywords{Potts model; external field; list coloring; graph coloring; antiferromagnetic; zero-temperature limit; statistical mechanics}

\date{\today}

 \begin{abstract}
 Here we observe that list coloring in graph theory coincides with the zero-temperature antiferromagnetic Potts model with an external field. We give a list coloring polynomial that equals the partition function in this case. This is analogous to the well-known connection between the chromatic polynomial and the zero-temperature, zero-field, antiferromagnetic Potts model.  The subsequent cross fertilization yields immediate results for the Potts model and suggests new research directions in list coloring.
\end{abstract}

% email secr@esi.ac.at with cc director@esi.ac.at
% subject: ESI research documentation/ Combinatorics, Geometry and Physics_ATV_2014
% body: author/title/arxiv or journal ref/ Combinatorics, Geometry and Physics_ATV_2014

\maketitle

\section{Introduction}\label{s.intro}
One of the many  fruitful connections between combinatorics and physics arises from the recognition that, with constant interaction energy and in the absence of an external field, the classical Tutte polynomial of graph theory and the Potts model of statistical mechanics are the same object. (Surveys of this connection may be found in \cite{BE-MPS10,Bol98,WM00,Sok05,We93}.) A relevant subcase of this is that the chromatic polynomial corresponds to the zero-temperature limit of the antiferromagnetic Potts model.

 In this note we offer simply an observation, but one which we believe builds a new and important bridge between the fields of graph theory and statistical mechanics.  We find an unexpected connection between a heavily-studied area of graph theory and the zero-temperature Potts model \emph{with} an external field. This connection not only provides new opportunities for researchers to  further reap the rewards of cross-pollination between the fields, but it also provides a formalization for, and gives an established graph theoretical foundation to, work in this area undertaken from a physics perspective.  For example, it parallels the work of  Shrock and Xu in \cite{SX10a, SX10b}, where  they study a partition function that is also a specialization of the $\V$-polynomial, but in a different limit.

 List coloring is an important generalization of proper graph coloring. A  list coloring  of a graph  is a proper coloring of it using, at each vertex, only colors drawn from a list specified for that vertex.
 It has been extensively studied since its introduction by V. Vizing \cite{Viz76} and by P. Erdos, A. Rubin and H. Taylor \cite{ERT79}, and some beautiful list coloring results have been obtained over the years.  The Potts model is a central object of study in statistical mechanics, and many of its applications involve some form of an external field.  Of interest in this model is the antiferromagnetic zero-temperature limit.  In this note we define a  list-chromatic polynomial, present it as a specialization of the $\V$-polynomial of \cite{EMM11}, and show that it agrees with the partition function for the zero-temperature antiferromagnetic Potts model with an external field.
 This extends the classic result that the chromatic polynomial corresponds to the zero-temperature limit of the antiferromagnetic Potts model in a zero-field to the setting of non-trivial external fields.
Our observation has immediate ramifications in both areas of study.

List coloring is fundamentally different from classical proper coloring. For example, while the proof of the Four Color Theorem for plane graphs is extremely complicated, the proof of the corresponding Five Choosability Theorem for list coloring plane graphs is both short and elegant (see \cite{Th94}).  Furthermore, while the chromatic number gives a lower bound for the choosability number, the gap between these two numbers can be arbitrarily large, even when restricted to bipartite graphs. Consequently, when there is no external field, there will be a zero energy ground state provided that the number of spin states is at least as large as the chromatic number, but in contrast, even if there are considerably more spin states, external field contributions be may chosen to stop the system settling into a zero energy ground state.  We give an example of this phenomenon on an augmented 3D cubic lattice.

The correspondence between external fields and list coloring leads to a number of other consequences, such as  a unified model incorporating boundary conditions, noting that phase transition is independent of preference strengths, reduction to the classical case for uniform spin preferences, and computational complexity implications.  We conclude with a some suggestions  of a few new directions in list coloring research suggested by the Potts model application.

\section{A review of the $\V$-polynomial and the Potts model}
In this section we provide a brief review of  the $\V$-polynomial and its specialization to the Potts model with an external field. Full details can be found in \cite{EMM11}.

Let $G=(V,E)$ be a graph (we allow multiple edges and loops). We will generally assume that $V=\{v_1, \ldots , v_n\}$, and will often specify a vertex $v_i$ by its index $i$.
 Edges will be identified either by name such as $e$, or by their endpoints as $\{i, j\}$. Although graphs may have multiple edges,  we eschew the bulky formalism of indexing the multiple edges as there is no danger of confusion here.  Since we will make frequent use of functions whose domain is either $E$ or $V$, we often denote function values by subscripts, for example writing $F_a$ for $F(a)$.

We work with graphs that are both vertex and edge weighted.  Here a {\em vertex weighted graph} is a graph $G$, together with a  weight function $\omega$ mapping $V(G)$ into a  torsion-free commutative semigroup. The {\em weight} of the vertex $i$ is the value $\omega_i$. Similarly, an {\em edge weighted} graph $G$ has a function $\gamma$ from its edge set $E(G)$, to a set $\boldsymbol{\gamma}:=\{\gamma_e\}_{e\in E(G)}$, with  $\gamma:e\mapsto \gamma_e$, for $e\in E(G)$. We use $\boldsymbol{x}$ to denote an indexed set of variables.

If $G$ is a vertex and edge weighted graph, and $e$ is an edge of $G$, then $G-e$ is the graph obtained from $G$ by deleting the edge $e$, leaving the vertex weight function unchanged, and restricting the edge weight function to $E(G)-\{e\}$. If $e$ is any non-loop edge of $G$, then $G/e$ is the graph obtained from $G$ by contracting the edge $e$ and changing  the vertex weight function as follows: if $v_i$ and $v_j$ are the vertices incident to $e$, and $v$ is the vertex of $G/e$ created by the contraction, then $ \omega(v) =  \omega(v_i) +\omega(v_j) $. In this case again the edge weight function is simply restricted to $E(G)-\{e\}$. Loops are not contracted.

\begin{definition}\label{mvw}
Let $S$ be a torsion-free commutative semigroup, let $G$ be a graph equipped with vertex weights $\boldsymbol{\omega}:=\{ \omega_i \} \subseteq S$  and edge weights $\boldsymbol{\gamma}:= \{\gamma_e\}$, and let $\boldsymbol{x}:=\{x_k\}_{k\in S}$ be a set of commuting variables. Then the {\em $\V$-polynomial} of  $G$,
denoted $\V(G) = \V(G, \omega; \boldsymbol{x}, \boldsymbol{\gamma}) \in \mathbb{Z} [ \{\gamma_e\}_{e\in E(G)}, \{x_k\}_{k\in S}  ], $ is defined recursively by:
\begin{align}
 \label{dc1} \V(G) &= \V(G-e)+\gamma_e  \V(G/e)\text{, if $e$ is a non-loop edge of $G$;} \\
\label{dc2} \V(G)&=(\gamma_e+1)\V(G-e)\text{, if $e$ is a loop;} \\
\label{dc3} \V(E_n)&= \prod_{i=1}^{n} x_{\omega_i}\text{, if $E_n$ consists of $n$ isolated vertices of weights  $\omega_1, \ldots , \omega_n$.}
\end{align}
\end{definition}

 It was shown in \cite{EMM11} that the $\V$-polynomial is well-defined, that is, it is independent of the order in which the deletion-contraction relation is applied to the edges. It was also shown that it admits a  spanning subgraph expansion:
\begin{equation}\label{vspan} \V(G)=\sum_{A\subseteq E(G)}   x_{c_1}   x_{c_2}\cdots   x_{c_{k(A)}}   \prod_{e\in A} \gamma_e ,   \end{equation}
where $c_l$ is the sum of the weights of all of the vertices in the $l$-th connected component of the spanning subgraph $(V(G), A)$ of $G$.
Spanning tree and spanning forest expansions for the $\V$-polynomial were given in \cite{MM}.

\bigskip

 We use an expression for the Potts model with an external field in a form that can be specialized to many other common models.
A {\em state} of a graph $G=(V,E)$ is an assignment  $\sigma: V\rightarrow \{1,\ldots , q\} $, for $q\in \mathbb{N}$, where the value of $\sigma_i := \sigma (v_i)$ is  the \emph{spin} at the vertex $i$.   We let $\s(G)$ denote the set of states of $G$.

For a given graph state $\sigma$ of $G$, the  \emph{Hamiltonian of  the Potts model with variable edge interaction energy and variable field} is
\begin{equation}\label{e.ham2}
h(\sigma) =  -\sum_{ \{ i,j \} \in E(G) }  J_{i,j} \delta(\sigma_i, \sigma_j)  - \sum_{v_i\in V(G)}    \sum_{\alpha =1}^q M_{i,\alpha}  \delta(\alpha,  \sigma_i).
\end{equation}
Here each edge $e=\{i, j\}$ has an associated  interaction energy (or spin-spin coupling)  $J_e=J_{i,j}$. The  $M_{i,\sigma_i}\in \mathbb{C}$ are the external field contributions. For a vertex $v_i$ with spin $\sigma_i$, the external field contributes $M_{i,\sigma_i}$ to the Hamiltonian. We record the possible (scalar) values of the  external field contributions at a vertex $v_i$ formally as a vector $ \boldsymbol{M}_i :=(  M_{i,1}, M_{i,2}, \ldots , M_{i,q}  )\in \mathbb{C}^q$.  Accordingly, we may view our graphs here as having edge weights given by $\{J_e\}_{e\in E(G)}$ and vector-valued vertex-weights $\{\boldsymbol{M}_i\}_{i\in V(G)}$. The Hamiltonian of Equation~\eqref{e.ham2} specialises to many other common Hamiltonians from the literature (see Section~6 of \cite{EMM11} for details).

The {\em Potts model partition function} is
\begin{equation*}%\label{z.sum}
Z(G, \boldsymbol{J},\boldsymbol{M};q,T) := \sum_{\sigma \in \s (G)}   \e{-\beta h(\sigma)}.
\end{equation*}
Here $\beta = 1/(\kappa T)$, where $T$ is the temperature of the system,  $\kappa
$ is the Boltzmann constant, and  $\boldsymbol{J}$ and $\boldsymbol{M}$ refer to the interaction energies and external field contributions on $G$, respectively.

\bigskip

 The main result of \cite{EMM11}, given here as Theorem \ref{ZV2} below, is that the Potts model partition function with a variable external field and variable edge interaction energies is an evaluation of the $\V$-polynomial. This  result directly extends the seminal connection between the zero-field Potts model partition function and the classical Tutte polynomial to one that incorporates external fields. Moreover,
appropriate specialisations of the result give graph polynomial connections, and Fortuin-Kasteleyn-type representations for various standard, widely studied models such as preferred spin, set of preferred spins, random field Ising model, etc., as given in \cite{EMM11}.
\begin{theorem}[\cite{EMM11}]\label{ZV2} Let $G$ be a graph with external field contributions given by  $\boldsymbol{M}_i=(M_{i,1}, \ldots , M_{i,q})\in \mathbb{C}^q$.
Then
\[  Z(G, \boldsymbol{J},\boldsymbol{M};q,T)=\V\left( G,\omega ; \;\{X_{\boldsymbol{M}}\}_{\boldsymbol{M}\in \mathbb{C}^q}    ,\;   \{  \e{\beta J_{i,j}}-1   \}_{\{i,j\}\in E(G)}      \right),\]
where the vertex weights  are given by $\omega (v_i) =\boldsymbol{M}_i$
 and, for any $\boldsymbol{M} = (M_1, \ldots , M_q) \in \mathbb{C}^q$,
$ X_{\boldsymbol{M}}=  \sum_{\alpha=1}^q \e{\beta M_{\alpha}}$.
\end{theorem}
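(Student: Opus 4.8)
The plan is to expand the partition function by a Fortuin--Kasteleyn-type manipulation and then to match the resulting sum, term by term, against the spanning subgraph expansion~\eqref{vspan} of the $\V$-polynomial, which we may use freely since it is established in \cite{EMM11}.

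First I would factor the Boltzmann weight of a state $\sigma$ over the edges and vertices of $G$ and rewrite each edge factor using the identity $\e{\beta J_e \delta(\sigma_i,\sigma_j)} = 1 + (\e{\beta J_e}-1)\,\delta(\sigma_i,\sigma_j)$, which is valid because $\delta(\sigma_i,\sigma_j)\in\{0,1\}$ (and which also correctly handles loops, where the relevant delta is identically $1$, giving the factor $(\e{\beta J_e}-1)+1$). Expanding the resulting product over $E(G)$ produces a sum over spanning subgraphs:
\[
\e{-\beta h(\sigma)} = \sum_{A\subseteq E(G)}\; \Bigl(\prod_{\{i,j\}\in A}(\e{\beta J_{i,j}}-1)\,\delta(\sigma_i,\sigma_j)\Bigr)\prod_{v_i\in V(G)}\e{\beta M_{i,\sigma_i}}.
\]

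Next I would substitute this into the definition of $Z$ and interchange the sums over states and over subsets:
\[
Z(G, \boldsymbol{J},\boldsymbol{M};q,T) = \sum_{A\subseteq E(G)}\Bigl(\prod_{e\in A}(\e{\beta J_e}-1)\Bigr)\sum_{\sigma\in\s(G)}\Bigl(\prod_{\{i,j\}\in A}\delta(\sigma_i,\sigma_j)\Bigr)\prod_{v_i\in V(G)}\e{\beta M_{i,\sigma_i}}.
\]
The product of Kronecker deltas is $1$ exactly when $\sigma$ is constant on each connected component of the spanning subgraph $(V(G),A)$, and is $0$ otherwise, so the inner sum over $\sigma$ factorizes over those components. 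If the $l$-th component has vertex set $C_l$, its factor is $\sum_{\alpha=1}^{q}\prod_{i\in C_l}\e{\beta M_{i,\alpha}}$; writing $c_l := \sum_{i\in C_l}\boldsymbol{M}_i$ for the (vector) weight this component acquires under contraction, as in~\eqref{vspan}, and using $\e{\beta a}\e{\beta b} = \e{\beta(a+b)}$ coordinatewise, this factor equals exactly $X_{c_l}$. Hence
\[
Z(G, \boldsymbol{J},\boldsymbol{M};q,T) = \sum_{A\subseteq E(G)} X_{c_1}X_{c_2}\cdots X_{c_{k(A)}}\prod_{e\in A}(\e{\beta J_e}-1),
\]
which is precisely~\eqref{vspan} for $\V\bigl(G,\omega;\{X_{\boldsymbol{M}}\}_{\boldsymbol{M}\in\mathbb{C}^q},\{\e{\beta J_{i,j}}-1\}_{\{i,j\}\in E(G)}\bigr)$ with $\omega(v_i)=\boldsymbol{M}_i$.

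I do not expect a serious obstacle; the computation is essentially the standard high-temperature expansion. The one point that needs care is the bookkeeping of the vertex weights: one must check that the factorization of the spin sum over components is compatible with the way component weights add under contraction, so that the per-component factor is correctly indexed as $X_{c_l}$ with $c_l$ the \emph{sum} $\sum_{i\in C_l}\boldsymbol{M}_i$ --- this is immediate, but it is the place where the vector-valued weights of the $\V$-polynomial do the work. As a cross-check one could instead avoid~\eqref{vspan} entirely and verify directly that the right-hand side satisfies the deletion--contraction relations~\eqref{dc1}--\eqref{dc3} together with the one-vertex evaluation, using $\e{\beta J_e} = (\e{\beta J_e}-1)+1$ for the loop rule; but the spanning subgraph route is shorter.
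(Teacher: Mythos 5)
Your proposal is correct. The paper itself imports Theorem~\ref{ZV2} from \cite{EMM11} without reproducing a proof, but your argument is exactly the standard Fortuin--Kasteleyn/high-temperature expansion that underlies it: you derive the representation~\eqref{fk} directly from the state sum (the paper instead presents~\eqref{fk} as a consequence of the theorem) and then match it term by term with the spanning subgraph expansion~\eqref{vspan}, with the vector-valued vertex weights adding over components precisely as required; the loop case and the factorization of the spin sum over components are both handled correctly, so there is nothing to fix.
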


Since the $\V$-polynomial has a deletion-contraction reduction (Equations~\eqref{dc1}--\eqref{dc3}), this means that, like the classical case, the Potts model with an external field does too.  A further consequence is the following Fortuin-Kasteleyn-type representation for the Potts model  with variable external field and variable edge interaction:
\begin{equation}\label{fk}  Z(G, \boldsymbol{J},\boldsymbol{M};q,T)= \sum_{A\subseteq E(G)}  X_{\boldsymbol{M}_{C_1}} \cdots X_{\boldsymbol{M}_{C_{k(A)}}}  \prod_{e\in A}  (\e{\beta J_e}-1) ,
\end{equation}
where $\boldsymbol{M}_{C_l}$ is the sum of the weights, $\boldsymbol{M}_i$, of all of the vertices $v_i$ in the $l$-th connected component of the spanning subgraph $(V(G), A)$, and  $X_{\boldsymbol{M}}=      \sum_{\alpha=1}^q \e{\beta M_{\alpha}}$,  for  $\boldsymbol{M} = (M_1, \ldots , M_q) \in \mathbb{C}^q$.

\section{List coloring and the zero-temp antiferromagnetic Potts model}

\subsection{List coloring}

 List colorings are generalizations of proper graph colorings in which each vertex $v_i$ is assigned a set (called a {\em list}) $l_i$ of allowed colors, and in which the coloring may only assign colors from $l_i$ to $v_i$.  Formally, let $\BL$ be a set ($\BL$ is just the large set from which the vertex lists are drawn; typically $\BL= \mathbb{N}$ ).  Given a graph $G=(V,E)$ and a set of list $L=\{l_i \subseteq \BL \}_{i\in V}$,    we say $G$ is \emph{$L$-colorable} if there is a proper coloring of $G$ with the color at each vertex $v_i$ belonging to the list $l_i$.  Note that if each $l_i=\{1,2,\ldots, k\}$, then $L$-colorability is exactly $k$-colorability.
A graph $G$ is \emph{$k$-choosable} for a fixed integer $k$ if it is $L$-colorable for any set of lists $L$ with $|l_i|=k$ for all $i$. The \emph{choosability}, $\lx$, of $G$ is the smallest $k$ such that $G$ is $k$-choosable. Note that although we say `list' here to be consistent with the literature, the $l_i$'s are simply sets. There are no repeated elements, nor does the order of the elements matter. Also, while these lists may in general be infinite, for the current application it suffices to assume they are finite.

\subsection{The list-chromatic polynomial}
Recall that the chromatic polynomial $\chi(G; \lambda)$ is the graph polynomial whose evaluation  $\chi(G; k)$ is the number of proper $k$-colorings of $G$, for each $k\in \mathbb{N}$.
Here we introduce an analogue of the chromatic polynomial for list colorings.
We will show that, just as the chromatic polynomial is a specialization of the Tutte polynomial (see, \cite{Bol98} for details on the chromatic and Tutte polynomials), the list-chromatic polynomial is a specialization of the $\V$-polynomial.

\begin{definition} Let $G=(V,E)$ be a graph with lists $L=\{l_i\}_{i\in V}$ drawn from some set $\BL$.  Let S be the semigroup $2^L$  under intersection. Assign each edge $e$ of $G$ the weight $\gamma_e=-1$.
Then the \emph{list-chromatic polynomial} is defined by
\begin{equation*}
P(G,L) := \V (G, L; \boldsymbol{x}, \boldsymbol{-1} ).
\end{equation*}
\end{definition}

Since $P(G,L)$ is a specialization of $\V$, it inherits a  linear recursion relation:
\begin{align}
   \label{nonloop}P(G,L) &= P(G-e,L')-P(G/e,L'') \text{, when $e$ is not a loop;} \\
   \label{loop} P(G,L)& =0\text{, when $e$ is a loop;} \\
   \label{noedge} P(G,L)&= \prod_{i=1}^{n} x_{l_i}\text{,  when $G$ has $n$ vertices and no edges.}
\end{align}
Here $L'=L$ since $V(G)=V(G-e)$, and the lists in $L''$ are the same as in $L$ except at the endpoints of $e$, where the list at the new merged vertex in $G/e$ is given by the intersection of the two lists at the endpoints of $e$ in $G$.

 $P(G,L)$ also inherits a spanning tree expansion from $\V$:
\begin{equation}\label{listchrom2}
 P(G,L)=\sum_{A\subseteq E(G)}  (-1)^{|A|} x_{c_1}   x_{c_2}\cdots   x_{c_{k(A)}}  ,
\end{equation}
where $c_l$ is the intersection of the lists of all of the vertices in the $l$-th connected component of the spanning subgraph $(V(G), A)$. In addition, spanning tree and spanning forest expansions for $P(G,L)$ follow from results in \cite{MM}.

\begin{theorem} Let $G=(V,E)$ be a graph with lists $L=\{l_i\subseteq \mathcal{L}\}_{i\in V}$. The list-chromatic polynomial counts list colorings of $G$  in that $P(G,L)|_{ \{ x_{l}=|l| \} }$   is the number of ways to list color $G$ using the lists $L$.  Here, for $l\subseteq \mathcal{L}$, $\{x_{l}=|l|\}$ means to substitute $|l|$ for each indeterminate $x_l$.
\end{theorem}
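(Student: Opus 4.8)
The plan is to prove this by induction on the number of edges of $G$, mirroring the deletion--contraction structure that $P(G,L)$ inherits from $\V$ (Equations~\eqref{nonloop}--\eqref{noedge}). The key observation I would establish first is that the quantity $N(G,L)$, defined as the number of proper $L$-colorings of $G$, satisfies exactly the same recursion. For a non-loop edge $e=\{i,j\}$, every $L$-coloring of $G-e$ either assigns different colors to $v_i$ and $v_j$ (these are the $L$-colorings of $G$) or assigns the same color to both; the latter are in bijection with the $L''$-colorings of $G/e$, where the list at the merged vertex is $l_i\cap l_j$, since a common color must lie in both lists. Hence $N(G-e,L)=N(G,L)+N(G/e,L'')$, i.e. $N(G,L)=N(G-e,L)-N(G/e,L'')$, matching \eqref{nonloop}. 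If $e$ is a loop at $v_i$, then $\delta(\sigma_i,\sigma_i)$ can never be avoided, so there are no proper colorings and $N(G,L)=0$, matching \eqref{loop}. Finally, if $G$ has $n$ vertices and no edges, the colorings are arbitrary independent choices, giving $N(G,L)=\prod_{i=1}^n |l_i|$, which is exactly $\prod_{i=1}^n x_{l_i}$ evaluated at $x_l=|l|$, matching \eqref{noedge}.

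With these three facts in hand, the induction is immediate: the base case is the edgeless graph, handled by \eqref{noedge}; and for the inductive step, pick any edge $e$. If $e$ is a loop, both $P(G,L)|_{\{x_l=|l|\}}$ and $N(G,L)$ are zero. If $e$ is not a loop, then $G-e$ and $G/e$ each have one fewer edge, so by the induction hypothesis $P(G-e,L)|_{\{x_l=|l|\}}=N(G-e,L)$ and $P(G/e,L'')|_{\{x_l=|l|\}}=N(G/e,L'')$; applying \eqref{nonloop} and comparing with the recursion for $N$ finishes it. One technical point worth noting is that the substitution $x_l\mapsto|l|$ must be consistent with the semigroup structure on $S=2^L$ under intersection: when we contract $e=\{i,j\}$, the $\V$-polynomial adds the vertex weights, which here means the new vertex has weight $l_i\cap l_j$, and correspondingly its variable $x_{l_i\cap l_j}$ is substituted with $|l_i\cap l_j|$ — precisely the size of the list at the merged vertex. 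So the substitution is compatible with the deletion--contraction reduction, which is what makes the induction go through cleanly.

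The main obstacle, such as it is, lies in setting up the bookkeeping of the lists correctly under contraction rather than in any deep combinatorics: one must be careful that "the number of proper $L$-colorings" is the right invariant and that the intersection-of-lists rule for $G/e$ genuinely captures the monochromatic-on-$e$ colorings of $G-e$. Once the bijection between $L''$-colorings of $G/e$ and those colorings of $G-e$ that agree on the endpoints of $e$ is spelled out, everything else is a routine induction. Alternatively, one could bypass the induction entirely and argue directly from the spanning-subgraph expansion \eqref{listchrom2}: substituting $x_{c_l}=|c_l|$ and recognizing $|c_l|$ as the number of colors available simultaneously to every vertex in the $l$-th component of $(V,A)$, the sum $\sum_{A\subseteq E}(-1)^{|A|}\prod_l |c_l|$ is an inclusion--exclusion over the "bad" events that the two endpoints of an edge receive the same color — exactly the standard inclusion--exclusion proof that the chromatic polynomial counts colorings, now carried out list-by-list. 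I would present the deletion--contraction induction as the main argument and perhaps remark on the inclusion--exclusion viewpoint.
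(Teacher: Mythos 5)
Your proposal is correct and follows essentially the same route as the paper: a deletion--contraction induction showing that the number of $L$-colorings satisfies the recursion $N(G,L)=N(G-e,L)-N(G/e,L'')$ by splitting the colorings of $G-e$ according to whether the endpoints of $e$ agree, with the loop and edgeless cases as in Equations~\eqref{loop} and~\eqref{noedge}. Your additional remarks on the compatibility of the substitution $x_l\mapsto|l|$ with the semigroup of lists under intersection, and the alternative inclusion--exclusion argument via~\eqref{listchrom2}, are correct elaborations of details the paper leaves implicit.
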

 \begin{proof}  The proof is a routine induction on the number of non-loop edges of $G$.  If $G$ has no non-loop edges, the result is immediate from  Equations \eqref{loop} and \eqref{noedge}.  Now let $e$ be a non-loop edge of $G$, and consider $G-e$ with lists given by $L'$.  The number of ways to list color $G-e$ is the number of ways when the endpoints of $e$ receive different colors, plus the number of ways when the endpoints of $e$ receive the same color.  This is just the number of ways to list color $G$ with lists from $L$ plus the number of ways to list color $G/e$ with lists from $L''$, from which the result follows.
\end{proof}

Note that, as would be expected, $P(G,L)$ specializes to the usual chromatic polynomial  when all the lists are the same, i.e., when the list coloring reduces to the usual graph coloring.  In this case, only one variable $x_l$ appears in $P(G,L)$, and the polynomial coincides with the chromatic polynomial $\chi(G;x_l)$ of $G$.

\subsection{The zero-temperature antiferromagnetic Potts model with an external field coincides with list coloring}
 It is well known that the chromatic polynomial corresponds to the zero-temperature limit of the antiferromagnetic zero-field $q$-state Potts model (i.e., when each $M_{i,\alpha}=0$ in Equation~\eqref{e.ham2}):
\[  \lim_{T \rightarrow 0} Z(G, \boldsymbol{J},\boldsymbol{0};q,T)  =  \chi(G;q).  \]
 We will now prove the main observation of this note: that  in the presence of an external field, the zero-temperature limit of the Potts model gives list colorings.

\begin{theorem}\label{obs}
The list-chromatic polynomial corresponds to the zero-temperature limit of the antiferromagnetic  Potts model (with a possibly non-zero external field). That is, if $G=(V,E)$ is a graph then for the antiferromagnetic model in which all external field contributions are non-positive, we have that
\[  \lim_{T \rightarrow 0} Z(G, \boldsymbol{J},\boldsymbol{M};q,T) =  P(G,L),   \]
where each list $l_i$ at $v_i$ is given by the positions of the zero terms of the external field contributions $\boldsymbol{M}_i$.
\end{theorem}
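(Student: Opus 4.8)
The plan is to read the result directly off the $\V$-polynomial presentation of the partition function, Theorem~\ref{ZV2}, by taking the zero-temperature limit $T\to 0$ (equivalently $\beta=1/(\kappa T)\to\infty$) and recognising what survives of the limit as the list-chromatic polynomial via the spanning-subgraph expansions \eqref{fk} and \eqref{listchrom2}.

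First I would use Theorem~\ref{ZV2} together with the Fortuin--Kasteleyn-type formula \eqref{fk} to write
\[
Z(G,\boldsymbol{J},\boldsymbol{M};q,T)=\sum_{A\subseteq E(G)} X_{\boldsymbol{M}_{C_1}}\cdots X_{\boldsymbol{M}_{C_{k(A)}}}\prod_{e\in A}\bigl(\e{\beta J_e}-1\bigr),
\]
where the vertex weights are the vectors $\boldsymbol{M}_i$ and $X_{\boldsymbol{M}}=\sum_{\alpha=1}^q\e{\beta M_\alpha}$. Since $E(G)$ is finite this is a finite sum, so $\lim_{T\to 0}$ may be passed inside and evaluated factor by factor. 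For the antiferromagnetic model each coupling $J_e$ is negative, so $\beta J_e\to-\infty$ and $\e{\beta J_e}-1\to -1$, whence $\prod_{e\in A}(\e{\beta J_e}-1)\to(-1)^{|A|}$.

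Next I would analyse the vertex factors. For a connected component $C_l$ of the spanning subgraph $(V(G),A)$ the merged weight is $\boldsymbol{M}_{C_l}=\sum_{i\in C_l}\boldsymbol{M}_i$, and its $\alpha$-th coordinate $\sum_{i\in C_l}M_{i,\alpha}$ is a sum of non-positive reals; it therefore vanishes exactly when $M_{i,\alpha}=0$ for every $i\in C_l$, i.e.\ exactly when $\alpha$ belongs to $c_l:=\bigcap_{i\in C_l}l_i$, since by hypothesis $l_i=\{\alpha:M_{i,\alpha}=0\}$. Hence $\e{\beta(\boldsymbol{M}_{C_l})_\alpha}\to 1$ for $\alpha\in c_l$ and $\to 0$ otherwise, so $X_{\boldsymbol{M}_{C_l}}\to|c_l|$. (Equivalently, the assignment $\boldsymbol{M}_i\mapsto l_i$ is a semigroup homomorphism from $(\mathbb{C}^q,+)$ to $(2^L,\cap)$ precisely because the field is non-positive, which is why the $\V$-polynomial structures match up in the limit.) Combining the two computations,
\[
\lim_{T\to 0}Z(G,\boldsymbol{J},\boldsymbol{M};q,T)=\sum_{A\subseteq E(G)}(-1)^{|A|}|c_1|\cdots|c_{k(A)}|,
\]
which by \eqref{listchrom2} is $P(G,L)$ with each variable $x_c$ set equal to $|c|$; by the counting theorem for $P(G,L)$ this value is the number of $L$-colorings of $G$, which is the identification asserted. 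One could instead argue by induction on $|E(G)|$, matching the recursion \eqref{nonloop}--\eqref{noedge} against the $T\to 0$ limit of the deletion--contraction relation \eqref{dc1}--\eqref{dc3} for $Z$, but the spanning-subgraph route is cleaner.

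The only step that needs genuine care — and the one that uses the hypotheses essentially — is the vanishing criterion for the coordinates of $\boldsymbol{M}_{C_l}$: it is exactly the non-positivity of the external field contributions that rules out cancellation among the $M_{i,\alpha}$, so that ``the $\alpha$-th coordinate of the component weight is $0$'' is equivalent to ``colour $\alpha$ is allowed at every vertex of that component.'' Without non-positive fields (and negative couplings) the termwise limits above can fail to exist, so the hypotheses cannot be dropped.
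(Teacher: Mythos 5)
Your proof is correct, but it takes a genuinely different route from the paper's. The paper argues directly from the state sum: since $J_e<0$ and $M_{i,\alpha}\le 0$, the Hamiltonian satisfies $h(\sigma)\ge 0$, so as $\beta\to\infty$ each Boltzmann factor $\e{-\beta h(\sigma)}$ tends to the indicator of $h(\sigma)=0$; the two non-negative pieces of $h$ must then vanish separately, the edge piece forcing a proper coloring and the field piece forcing $\sigma_i\in l_i$, so the limit counts list colorings (and hence equals $P(G,L)$ at $x_l=|l|$ by the preceding counting theorem). You instead take the limit termwise in the Fortuin--Kasteleyn expansion \eqref{fk}, sending $\e{\beta J_e}-1\to -1$ and $X_{\boldsymbol{M}_{C_l}}\to|c_l|$, and match the result against the subgraph expansion \eqref{listchrom2} of $P(G,L)$. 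Both are sound; the paper's argument is shorter and makes the physical content transparent (ground states are exactly list colorings), while yours identifies the polynomial $P(G,L)$ without detouring through its combinatorial interpretation and isolates the real structural reason the limit works, namely that for non-positive fields the map $\boldsymbol{M}_i\mapsto\{\alpha: M_{i,\alpha}=0\}$ carries vector addition to list intersection with no cancellation, which is precisely the semigroup compatibility the $\V$-polynomial specialization requires. Your closing remark that the hypotheses are what prevent cancellation in the coordinates of $\boldsymbol{M}_{C_l}$ is a point the paper's proof uses only implicitly, and is worth making explicit.
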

\begin{proof}
Let $G=(V,E)$ be a graph. For the antiferromagnetic model, $J_e <0$ for all edges, and we further assume that all field vectors have non-positive entries.

For the zero-temperature limit, as $T \rightarrow 0$ we have that   $\beta \rightarrow \infty$, so $\exp(-\beta h(\sigma))=0$ unless $h(\sigma) =0$, in which case it is 1.  Thus, in the limit, $Z(G, \boldsymbol{J},\boldsymbol{M};q,T)$ counts states $\sigma$ in which $h(\sigma)=0$.
Since $J_e < 0$  and $M_{i,\alpha} \leq 0$ for all $i$ and $\alpha$, both  $\sum\limits_{ \{ i,j \} \in E }  J_{i,j} \delta(\sigma_i, \sigma_j)$  and  $ \sum\limits_{v_i\in V}    \sum\limits_{\alpha =1}^q M_{i,\alpha}  \delta(\alpha,  \sigma_i)$ must then  be zero.  The former sum is zero if and only if $\sigma$ is a proper coloring.    For the latter sum, note that if we assign a list $l_i$ to a vertex $i$ by $r \in l_i \iff M_{i,r}=0$, then $ \sum\limits_{v_i\in V}    \sum\limits_{\alpha =1}^q M_{i,\alpha}  \delta(\alpha,  \sigma_i) =0 \iff \alpha \in l_i$.
Thus, $h(\sigma)=0 \iff \sigma$ gives a list coloring using the lists $l_i$.
\end{proof}

The negative entries in the external field contributions may be thought of as encoding non-prefered spins as in \cite{SX10a, SX10b}, but here the entries are not restricted to an interval as in that previous work.  Since, as noted in Section~\ref{ramifications}, the values of the entries do not matter, the zero entries in the external field contributions may be thought of as preferred spins.

Note that in the absence of an external field (when all $M_{i,\alpha}=0$) we have $l_i = \{1, \ldots , q\}$ for all $i$. Then $Z(G, \boldsymbol{J},\boldsymbol{0};q,T)$ counts proper colorings, which is just the classical connection between the chromatic polynomial and the zero-temperature Potts model.

\subsection{Phase transitions}
  Since the fundamental thermodynamic functions involve taking logarithms of the partition functions, phase transitions occur when the infinite volume limit of the partition function is zero.  In particular, we consider the  ground state entropy (per vertex) of the Potts
antiferromagnetic model in the infinite volume limit:
 \[\kappa \lim_{n \rightarrow \infty}  \lim_{T \rightarrow 0} \frac{1}{n} \ln (Z(G_n, \boldsymbol{J},\boldsymbol{M};q,T)) =\kappa \lim_{n \rightarrow \infty} \frac{1}{n} \ln (P(G_n;L_n)).
 \]
 Here $n \rightarrow \infty$ means that there is an infinite family of graphs (typically lattices) that grow in size, with some modest constraints, such as there be natural inclusions $G_1 \subseteq \ldots \subseteq G_n \subseteq \ldots$, and that the external field contributions or lists respect these inclusions so that the external field or list on a vertex in $G_i$ remains on that vertex under its inclusion in all $G_j$ for $i<j$.

\bigskip

We pause here for some brief historical notes. On the combinatorics side, D. Forge and  T. Zaslavsky, in  \cite{FZ13}, studied a polynomial of gain graphs, and their work includes a list-chromatic polynomial for gain graphs.  Their polynomial, developed independently, is essentially an evaluation of the $\V$-polynomial of \cite{EMM11}, specialized to gain graphs, and their list-chromatic polynomial for gain graphs is similar in flavor, if not in detail, to the list-chromatic polynomial given here.

On the statistical physics side,  R. Shrock and Y. Xu considered the {\em weighted-set chromatic polynomial},  $Ph(G)$, in \cite{SX10a,SX10b}, and note but do not pursue a possible list-coloring connection. Like the list-coloring polynomial, the polynomial $Ph(G)$ is an extension of the chromatic polynomial that arises from the Potts model partition function in an external field. The polynomial $Ph(G)$ can  be recovered from the $\V$-polynomial by removing the $\e{\beta J_e}$ from the product in \eqref{fk}, and taking each $\boldsymbol{M}=(H,\ldots, H, 0,\ldots, 0)$ where the number of $H$'s is $s$. However, the polynomial  $Ph$ is not the list-coloring polynomial. $Ph$ is obtained from the Potts model by taking the limit $K=\beta J\rightarrow \infty$, rather than $\beta \rightarrow \infty$ as we do here to obtain list coloring (in $Ph$, taking $\beta \rightarrow \infty$ corresponds to $w\rightarrow 1$ which gives the chromatic polynomial). It would be valuable to have combinatorial framework for studying $Ph$.

\section{Ramifications and perspectives for future work} \label{ramifications}
We provide  a few examples of ramifications of Theorem~\ref{obs}  to illustrate that meaningful results should emerge by mining the relevant statistical mechanics and graph theory literature, and correlating results across the fields.

$\bullet$ \emph{Contrast to the zero-field case for minimum energy states.} In the zero-field case, understanding minimum energy states  is straightforward:  a system can achieve a zero energy state if and only if the number of spin states is at least as large as the chromatic number, $\gamma(G)$, of its graph $G$.  This is not the case in the presence of an external field.  Here, if $q<\gamma(G)$ then again the system cannot achieve a zero energy state.  However, if $q \geq \gamma(G)$, then whether or not the system is able to achieve a zero energy state is highly dependent on the exact form of the external field contributions.  If all lists have size greater than $\lx$, then the system will be able to achieve a zero energy state. In general, since the external field contributions determine the lists on the vertices, the question  becomes whether or not the external field contributions give rise to lists from which $G$ is list colorable.

For example,   let $q=3$ and consider the cubic lattice augmented by adding the three internal diagonal edges to each unit cube, so that each cube is a copy of $K_{4,4}$.  Assign the following external field contributions according to the coordinates of the  vertices: $(-1,0,0)$ if $x+y \equiv 0 \mod 3$, then $(0,-1,0)$ if $x+y \equiv 1 \mod 3$, and $(0,0, -1)$ if $x+y \equiv 2 \mod 3$, irrespective of the $z$-coordinate.  This system does not have a zero energy state since the lattice cannot be colored from the resulting lists.

On the other hand, Thomassen \cite{Th94} showed that planar graphs are 5-choosable.  This means that in a planar system with  $q$ spin states,  as long as there are at least 5 zero entries in the external field contribution at each vertex (so the lists all have size at least 5), then the system will have zero energy states.

$\bullet$ \emph{Independence of preference strength.}  In the conversion from the Potts model to list coloring, the values of the entries in the external field contributions do not matter, just their positions.  Thus, any phase transition or other zero-temperature phenomenon captured by the partition function is independent of the strengths of the spin preferences and only depends on which spins are preferred.

$\bullet$ \emph{Boundary conditions.} Many models, both with and without external fields, assume boundary conditions in the form of fixed spins on specified vertices.  The formalism here gives a unified approach to zero-temperature limits of antiferomagnetic boundary condition models.  A boundary vertex $i$ with a fixed spin of, say, $b$ is simply assigned a field contribution with $M_{i,\alpha}$ equal to 0 if $\alpha =b$, and -1 otherwise.  This forces its spin to be $b$ in every state counted in $\lim\limits_{T \rightarrow 0} Z(G, \boldsymbol{J},\boldsymbol{M};q,T)$.
In particular, there is now a deletion-contraction reduction and Fortuin-Kasteleyn-type  expansion for models with boundary conditions in the zero-temperature limit.

Focussing on the combinatorics, precoloring is a form of graph coloring in which colors are specified on some vertices of the graph, and then the rest of the vertices are colored so as to achieve a proper coloring.   Precoloring thus corresponds to the zero-temperature limit of a system with boundary conditions.

 In \cite{JS},  J. Jacobsen and H. Saleur studied a variant of the chromatic polynomial, called the  {\em boundary chromatic polynomial}, $P_G(q,q_s)$ that arose from the Potts model partition function with boundary conditions. They considered $q$-colorings of graphs embedded in the annulus in which vertices on the boundary can only be coloured with a subset of $q_s$ colors. Their polynomial $P_G(q,q_s)$ is easily recovered as an evaluation of $P(G;L)$  by giving the non-boundary vertices  lists $ \{1, \ldots ,q\}$ and the boundary vertices a list consisting of the allowed $q_s$ colours, and then applying  Equation~\eqref{listchrom2}. Note that applying Equation~\eqref{vspan} instead gives the polynomial $Z_G(q,q_s; \boldsymbol{v})$ from \cite{JS}.

$\bullet$ \emph{Uniform preferences.}   We say that  external field contributions are \emph{$s$-uniform} if they have $s$ zero entries and are the same at each vertex, as is the case in many common models.   In this case, the corresponding lists at each vertex in the zero-temperature limit all have size $s$ and are the same.  Thus, at zero-temperature, the partition function reduces to the chromatic polynomial but evaluated at  $s$ not $q$.  This gives a succinct mathematical proof that the zero-temperature phase transitions for preferred spin models will coincide with those of  a zero-field model with fewer spin states. For example, if $q=3$, then  a system with two preferred spins (so one non-preferred spin, and the external field contributions have two zero entries, so the lists have size two),  will have the same zero-temperature phase transition as it does for the classical zero-field Ising model.

$\bullet$\emph{Computational complexity.}  In the zero-field case there has been considerable interplay of computational complexity results from the fields of graph theory, and statistical mechanics  (see \cite{We93}, for example).  This appears to be the case here as well,  as indicated by the following sample results.  We begin by noting that the number of minimum energy states is an important statistic.  In the case that the minimum energy is zero, this number is given by evaluating the list coloring polynomial.  As demonstrated by a number of results from the list coloring literature, this is often NP-hard in general. However, there are  tractable special cases, particularly for graphs of bounded treewidth.  As a representitive example, in \cite{JS92} we find that  list coloring can be done in $O(n^{t+2})$-time where $n$ is the size of the graph and $t$ is its treewidth.    Thus, the number of zero energy states may be computed in polynomial time for systems with bounded treewidth.  A second example, from  \cite{Fetal11}, is that for graphs of constant treewidth, given a constant $r$, there is a linear time algorithm to tell if $\lx \leq r$ when all the lists on $G$ have size at least $r$.  This means that for such graphs, there is a linear time algorithm that will determine that there are zero energy states given $r$ as the minimum number of zeros appearing in any of the external field contributions.

\medskip

Recognizing this new application of list coloring to statistical physics now leads to  new directions in list coloring.  We summarize some areas of interest in statistical mechanics  restated as list coloring problems.  These problems often shift the emphasis of list coloring problems from choosability in general, to specific classes of graphs (especially lattices) and to specific lists.

$\bullet$ \emph{Families of graphs.} In statistical mechanics, the behaviour  of partition functions of families of graphs that grow in some controlled way, such as square, triangular, hexagonal, or cubic lattices, $C_n \times C_n$, ladders, etc. are often considered. In terms of list coloring, these considerations place an emphasis on list coloring results for these families of graphs: when may or may not they be list colored and from what types of lists.  In the questions below, again these would be some relevant classes of graphs to consider.

$\bullet$ \emph{Phase transitions.} A fundamental question in statistical mechanics is whether there is a phase transition at zero-temperature.  This manifests as a failure of analyticity in the infinite volume limit.  In the zero-field case, this problem has been approached by considering accumulation points, and by clearing regions of the complex plane to show that there can be no real accumulation point in some particular interval.  Here, however, the problem becomes multi-dimensional as the parameters are not just a single integer $q$, but all integers from 0 to $q$, representing the possible list sizes.  Thus, an analogous approach would involve clearing regions of $\mathbb{C}^q$ for the list coloring polynomial for specific classes of graphs.

$\bullet$\emph{Counting.} The number of minimum energy states of a system is a relevant statistic.  Assuming the minimum energy is zero, this translates to asking how many ways a graph $G$ may be colored from a given set of lists.  As noted above, this is given by evaluating the list-chromatic polynomial, and is known to be NP-hard in many cases.  However, even bounds for relevant classes of graphs or particular forms of lists would be interesting.

\bibliographystyle{plain}

\begin{thebibliography}{99}
%\bibitem{AGH00} M. Albertson, S. Grossman and R. Haas, Partial list coloring, Discrete Math, 214 (2000) 235--240.

	\bibitem{Bax82} R.J. Baxter,  Exactly Solved Models in Statistical Mechanics. Academic Press, New York, 1982.

\bibitem{BE-MPS10} L. Beaudin, J. Ellis-Monaghan, G.  Pangborn and R. Shrock, A little statistical mechanics for the graph theorist, Discrete Math. 310  (2010)  2037--2053.

\bibitem{Bol98} B. Bollob\'as,  Modern Graph Theory.  Graduate Texts in Mathematics, Springer-Verlag New York, Inc., New York, (1998).

%\bibitem{Cha99} G. Chappell, A lower bound for partial list coloring, J. Graph Theory 32 (1999) 390--393.

\bibitem{EMM11} J. Ellis-Monaghan and I.  Moffatt,  The Tutte-Potts connection in the presence of an external magnetic field, Adv. in Appl. Math. 47 (2011) 772--782.

\bibitem{ERT79} P. Erdos, A. Rubin and H. Taylor, Choosability in graphs, Proc. West Coast
Conf. on Combin., Graph Theory and Computing, Congressus Numerantium 26
(1979) 125--127.

\bibitem{Fetal11} M.R. Fellows, F.V. Fomin,
 D. Lokshtanovc, F. Rosamond,
S. Saurabh, S. Szeider, C. Thomassen, On the complexity of some colorful problems parameterized by treewidth. Information and Computation 209 (2011) 143--153.

\bibitem{FZ13} D.Forge and  T. Zaslavsky, Lattice Points in Orthotopes and a Huge Polynomial Tutte Invariant of Weighted Gain Graphs, preprint, arXiv:1306.6132.

%\bibitem{HHM03}  R. Haas, D. Hanson and G. MacGillivray, Bounds for partial list colorings, Ars Combin. 67 (2003) 27-–31.

%\bibitem{Ira10} M. Iradmusa, A note on partial list coloring, Australas. J. Combin. 46 (2010) 19–-24.

\bibitem{JS92} K.Jansen and P. Scheffler,
Generalized coloring for tree-like graphs,
Discrete Appl. Math. 75  (1997) 135--155.


\bibitem{JS} J.L. Jacobsen, and H. Saleur,  Boundary chromatic polynomial,  J. Stat. Phys. 132 (2008)  707--719.


\bibitem{LM97} J.L. Lebowitz and A.E. Mazel, On the uniqueness of Gibbs states in the Pirogov-Sinai theory, Commun. Math.
Phys. 189 (1997) 311--321.

\bibitem{WM00}   C.  Merino and D.J.A. Welsh, The Potts model and the Tutte polynomial, Probabilistic techniques in equilibrium and nonequilibrium statistical physics, J. Math. Phys. 41   (2000) 1127--1152.

%\bibitem{MP09} T. Morais and A. Procacci, Absence of phase transitions  in a class of  integer spin systems, J.  Stat. Phys.   136 (2009)  677--684

\bibitem{MM} L. McDonald and I.  Moffatt, On the Potts model partition function in an external field, J. Stat. Phys. 146 (2012) 1288--1302.

\bibitem{NW99} S.D. Noble and  D.J.A. Welsh, A weighted graph polynomial from chromatic invariants of knots. Symposium (Grenoble, 1998)., Ann. Inst. Fourier (Grenoble) 49  (1999) 1057--1087.

\bibitem{Sok99}  A.D. Sokal,  Bounds on the complex zeros of (di)chromatic polynomials and Potts-model partition functions, Combin. Probab. Comput. 10  (2001) 41--77.

\bibitem{Sok04}   A.D. Sokal,
Chromatic roots are dense in the whole complex plane,
Combin. Probab. Comput. 13 (2004) 221--261.

\bibitem{Sok05}  A.D. Sokal,   The multivariate Tutte polynomial (alias Potts model) for graphs and matroids. Surveys in Combinatorics, edited by Bridget S. Webb, Cambridge University Press,  pp. 173--226 (2005).

\bibitem{SX10a} R. Shrock, and Y. Xu, Exact Results on Potts Model Partition Functions in a Generalized External Field and Weighted-Set Graph Colorings, J. Stat. Phys. 141 (2010)  909--939.

\bibitem{SX10b} R. Shrock, and Y. Xu,  Weighted-Set Graph Colorings, J. Stat. Phys. 139   (2010) 27--61.

\bibitem{Th94} C. Thomassen,  Every planar graph is 5-choosable,  J. Combin. Theory Ser. B 62 (1994)  180--181.


\bibitem{Viz76} V. Vizing, coloring the vertices of a graph in prescribed colors, Diskret Analiz. 29 (1976) 3--10.


\bibitem{We93} D.J.A. Welsh, Complexity: Knots, Colorings and Counting, Cambridge Univ. Press, Cambridge, 1993.



\end{thebibliography}

\end{document}